\newtheorem{theorem}{Theorem}
\newtheorem{prop}{Proposition}
\newtheorem{lemma}[prop]{Lemma}
\newtheorem{example}[prop]{Example}
\theoremstyle{definition}
\newtheorem{definition}[prop]{Definition}
\newtheorem{rmk}{Remark}
\newtheorem{obs}{Observation}
\title{Tracking the Variety of Interleavings}
\author{Ojaswi Acharya, Stella Li, David Meyer and Jasmine Noory }
\date{September, 2020}
\begin{document}

\maketitle

\begin{abstract}
\noindent
In topological data analysis persistence modules are used to distinguish the legitimate topological features of a finite data set from noise.  Interleavings between persistence modules feature prominantly in the analysis.  One can show that for $\epsilon$ positive, the collection of $\epsilon$-interleavings between two persistence modules $M$ and $N$ has the structure of an affine variety,  Thus, the smallest value of $\epsilon$ corresponding to a nonempty variety is the interleaving distance.  With this in mind, it is natural to wonder how this variety changes with the value of $\epsilon$, and what information about $M$ and $N$ can  be seen from just the knowledge of their varieties.  

In this paper, we focus on the special case where $M$ and $N$ are interval modules.  In this situation we classify all possible progressions of varieties, and determine what information about $M$ and $N$ is present in the progression.
\end{abstract}

\section{Acknowledgements}
The research in this paper was conducted by Ojaswi Acharya, Stella Li, Jasmine Noory and David Meyer while Acharya, Li and Noory were students at Smith College in the 2018-19 academic year.

\section{Introduction}
In  \cite{meehan_meyer_1} it was shown that the collection of $\epsilon$-interleavings between two generalized persistence modules for a poset has the structure of an affine variety.  In this paper, we consider the class of (truncated) one-dimensional persistence modules which arise from applying homology to a filtration of a finite data set.  Thus, we confine our attention to persistence modules which are direct sums of (finite) interval modules of the form $[\alpha,\beta)$, for $\alpha < \beta$.  

We will investigate what happens when one studies the {\emph{full collection}} of interleavings between two persistence modules, as opposed to the interleaving distance between them.  If $M, N$ are persistence modules and $\epsilon>0$, the collection of $\epsilon$-interleavings between $M$ and $N$ has the structure of a variety $V_{M,N}^{\epsilon}$.  The interleaving distance $D$ between $M$ and $N$, is the smallest value of $\epsilon$ associated to a nonempty variety (see Figure \ref{figure progression}).  

\begin{figure}[H]
\begin{center}
  \begin{tikzpicture}
 \draw[->]  (0,0) -- (10,0);
 \draw (0,.1) -- (0,-.1) node[below] {\scriptsize{0}};
 \draw (2,.1) -- (2, -.1) node[below,color=blue] {\scriptsize{$D=\epsilon_1$}};
 \draw (5,.1) -- (5, -.1) node[below] {\scriptsize{$\epsilon_2$}};
  \draw (8,.1) -- (8, -.1) node[below] {\scriptsize{$\epsilon_3$}};
  \draw [-,dotted,thick] (10,.5) -- (10.2,.5);
  \draw [-,color=blue] (2,.1) -- (2,.5);
  \draw [-] (5,.1) -- (5,.5);
  \draw [-] (8,.1) -- (8,.5);
 
 \node at (1,1) {\includegraphics[scale=0.08]{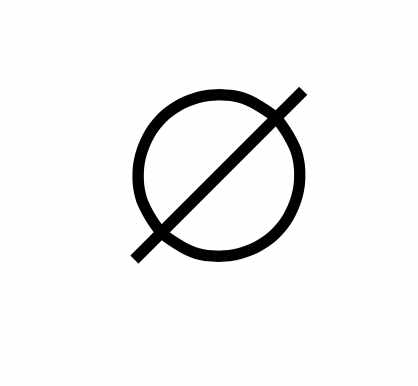}};
 \node[color=blue] at (3,1) {$V_{M,N}^D=V^1$};  
 \node at (6,1) {$V_{M,N}^{\epsilon_2}=V^2$}; 
 \node at (9.1,1) {$V_{M,N}^{\epsilon_3}=V^3$}; 
\end{tikzpicture}
\end{center} 
\caption{The progression of varieties $V_{M,N}^{\epsilon}$ and the interleaving distance \textcolor{blue}{$D$}}
\label{figure progression}
\end{figure}

In this setting many questions come to mind.  Which varieties can  appear in this way?  As $\epsilon$ varies, how do they change?  Which ordered lists ({\emph{progressions}}) of varieties are possible?  Does the progression itself detect something about the interleaving distance between the persistence modules that give rise to it?  In our main result, {\bf{Theorem \ref{theorem main}}}, we answer all these questions in the special case that both $M$ and $N$ are interval modules.  

It follows from the completion of the isometry theorem (see \cite{induced_matchings}), that every $\epsilon$-interleaving comes from an $\epsilon$-matching.  Moreover, from \cite{meehan_meyer_2}, \cite{lesnick},  it's clear that the interleaving distance $D$ between the interval modules $[a,b)$ and $[c,d)$ is given by the formula
\begin{equation}
\label{equation distance}
D=min\{m_1,m_2\}\textrm{, where } m_1 = max\{|a-c|,|b-d|\} \textrm{ and }
 m_2 = max\left\{ \tfrac{b-a}{2},\tfrac{d-c}{2} \right\} 
\end{equation}
In {\bf{Proposition} \ref{prop hom}} we give a physical interpretation for these numbers, showing for example that $m_1$  corresponds to the place where the {\emph{last}} homomorphism between the two intervals in born.  The numbers $m_1$ and $m_2$ are then used in {\bf{Theorem} \ref{theorem main}} in our classification of the progression $V_{M,N}^{\epsilon}$, when $M$ and $N$ are both intervals.

This paper is organized as follows.  In {\bf{Section} \ref{section prelim}} we remind the reader of some preliminaries.  In {\bf{Section} \ref{section var}} we define the variety of interleavings between two persistence modules, and then in  {\bf{Section} \ref{section progr}} we give some examples of progressions of varieties.  Lastly, in {\bf{Section} \ref{section main}} we show that our examples in {\bf{Section} \ref{section progr}} constitute an exhaustive list, and we prove our main results.

\section{Preliminaries}
\label{section prelim}
In this Section, we give a brief review of one-dimensional persistence modules.  For a more extensive introduction, see \cite{oudot}.

\begin{definition}
\label{definition pms and morphisms}
A persistence module $M$ is an assignment 
 $$\textrm{ of vector spaces }\{M(x)\} \textrm{, for  }x \in [0,\infty)\textrm{ and  linear maps }\{M(x \leq y)\} \textrm{, for }x \leq y \textrm{ satisfying; } $$
\begin{enumerate}
\item $M(x \leq y):M(x) \to M(y)  \textrm{ and } $
\item $M(x \leq z)=M(y\leq z)\circ M(x \leq y)$ for all $x, y, z$ with $x \leq y \leq z$.
\end{enumerate}
If $M$ and $N$ are two persistence modules, a morphism from $F:M \to N$ is a collection of linear maps $\{F(x):M(x) \to N(x)\}$ indexed by $x \in [0,\infty)$ with the property that 
\begin{equation}
F(y) \circ M(x \leq y)=N(x \leq y)\circ F(x)\textrm{ for } x\leq y. 
\end{equation}
\end{definition}

Thus, a morphism from $M$ to $N$ is a family of linear maps that gives rise to the commutative diagram in Figure \ref{figure morphism}.  One can view persistence modules as functors on a thin category, in which case morphisms correspond to natural transformations.  This categorical perspective will not be necessary for our purposes.

\begin{figure}
\begin{center}
\includegraphics[scale=.9]{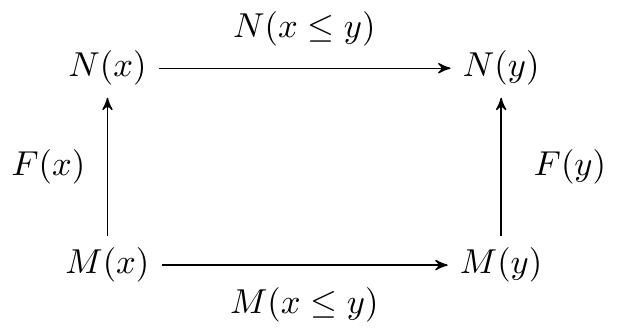}
\end{center}
\caption{$F$ is a morphism from $M$ to $N$}
\label{figure morphism}
\end{figure}

The monoid $({\mathbb{R}}_{\geq 0},+)$ acts on the category of persistence modules on the by the formulas;

\begin{equation}
(M\cdot \epsilon)(x)=M(x+\epsilon)\textrm{, and } (F\cdot \epsilon)(x)=F(x + \epsilon)
\end{equation}
In the latter identity, if $F: M \to N$, then $F \cdot \epsilon : M\cdot \epsilon \to N \cdot \epsilon$.  Thus, the action {\emph{shifts to the left by $\epsilon$}}.  When there is no ambiguity, we suppress the dot and simply write $M\epsilon$ and $F\epsilon$ respectively.  

If  $\alpha < \beta$, the interval module $M_{[\alpha,\beta)}$ is the persistence module defined by the equations;
\begin{equation}
M_{[\alpha,\beta)}(x)=\begin{cases}\mathbb{R}, & x \in [\alpha,\beta)\\ 0, & x \notin [\alpha,\beta)     \end{cases} \hspace{.3 in} M_{[\alpha,\beta)} (x \leq y)=\begin{cases} Id, & x,y \in [\alpha, \beta)\\ 0, & \textrm{ otherwise.} \end{cases}
\end{equation}

Thus, $M_{[\alpha,\beta)}$ is the persistence module whose {\emph{support}} is given by the interval $[\alpha,\beta)$.  Whenever there is no ambiguity, we write $[\alpha,\beta)$ for $M_{[\alpha,\beta)}$.  If $M$ and $N$ are two persistence modules, then $H(M,N)$ is the collection of morphisms from $M$ to $N$.  If $M, N$ are interval modules, $H(M,N)$ is either one-dimensional or identically zero.

\begin{lemma}
\label{lemma hom}
Let $\alpha < \beta$ and $\gamma < \delta$.  Then $H([\alpha,\beta),[\gamma,\delta)) \neq \{0\} \iff \gamma \leq \alpha < \delta \leq \beta$.  
\end{lemma}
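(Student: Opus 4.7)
The plan is to prove both directions by directly exploiting the $1$-dimensionality of all the vector spaces involved, so that each $F(x)$ is just a scalar (either zero or nonzero) and the commutativity condition becomes a collection of scalar equations.

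For the forward direction ($\Rightarrow$), I would assume $F \neq 0$ and pick some $x_0$ with $F(x_0) \neq 0$. This forces both $M_{[\alpha,\beta)}(x_0)$ and $M_{[\gamma,\delta)}(x_0)$ to be nonzero, so $x_0 \in [\alpha,\beta) \cap [\gamma,\delta)$; in particular, the intersection is nonempty, giving $\max(\alpha,\gamma) < \min(\beta,\delta)$. Call $c = F(x_0) \neq 0$. Now I would use the naturality square in two directions to pin down $\gamma \le \alpha$ and $\delta \le \beta$ separately. For the first, take $x \in [\alpha, x_0]$: then $M_{[\alpha,\beta)}(x \leq x_0) = \mathrm{Id}$, so the commutativity square reads $c = N(x \leq x_0) \circ F(x)$, forcing $N(x \leq x_0) \neq 0$, i.e.\ $x \in [\gamma,\delta)$. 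Letting $x = \alpha$ gives $\gamma \leq \alpha$. For the second, suppose $\delta > \beta$ toward contradiction; then $y := \beta$ lies in $[\gamma,\delta)$ (using $\gamma \leq \alpha < \beta$) but not in $[\alpha,\beta)$. The square for $x_0 \leq y$ gives $F(y) \circ M(x_0 \leq y) = N(x_0 \leq y) \circ c$, whose left side vanishes while the right side equals $c \neq 0$.

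For the reverse direction ($\Leftarrow$), assuming $\gamma \leq \alpha < \delta \leq \beta$, I would explicitly define the morphism by
\begin{equation*}
F(x) = \begin{cases} \mathrm{Id}, & x \in [\alpha,\delta), \\ 0, & \text{otherwise,} \end{cases}
\end{equation*}
and then verify the commutativity relation $F(y) \circ M(x \leq y) = N(x \leq y) \circ F(x)$ by splitting on where $x$ and $y$ land among the four intervals $(-\infty,\alpha)$, $[\alpha,\delta)$, $[\delta,\beta)$, $[\beta,\infty)$. In each case either both sides reduce to $\mathrm{Id}$ (both points in $[\alpha,\delta)$) or one of the four structure maps vanishes, killing the whole square; the inclusions $\gamma \leq \alpha$ and $\delta \leq \beta$ are exactly what is needed to make each of these vanishings go through on the correct side.

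There is no real obstacle here, since everything reduces to scalar checks; the only point demanding mild care is keeping the case analysis in the converse organized so that no case is forgotten, and making sure that the two inequalities $\gamma \leq \alpha$ and $\delta \leq \beta$ are each used in the right place (to kill $N(x \leq x_0)$-type and $M(x_0 \leq y)$-type structure maps respectively).
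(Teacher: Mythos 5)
Your proof is correct, and it takes a genuinely different route from the paper. The paper does not argue from the definitions at all: it simply cites the standard structural fact that a nonzero morphism between interval modules factors as a quotient of the domain followed by a submodule inclusion into the codomain, and then reads off the inequalities from the observation that quotients of $[\alpha,\beta)$ have the form $[\alpha,\beta')$ with $\alpha<\beta'\le\beta$ while submodules of $[\gamma,\delta)$ have the form $[\gamma',\delta)$ with $\gamma\le\gamma'<\delta$, so the image must be $[\alpha,\delta)$ with $\gamma\le\alpha<\delta\le\beta$. Your argument instead works directly with the scalar-valued commutativity squares and never invokes the classification of subobjects or quotients of interval modules; it is more elementary and makes transparent exactly which inequality is being used to kill which structure map. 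The trade-off is that the paper's approach is shorter once the structure theory is available, whereas yours is self-contained. One small bonus of your explicit computation: you correctly identify the support of the nonzero morphism as $[\alpha,\delta)$ (the intersection of the two supports), whereas the display in the paper just below the lemma writes the support as $[\gamma,\beta)$, which appears to be a typo.
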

This result is well-known, and follows immediately from the requirement that a quotient of $[\alpha,\beta)$ be isomorphic to a submodule of $[\gamma,\delta)$.  One immediately checks that if indeed $\gamma \leq \alpha < \delta \leq \beta$, every morphism $G$ from $[\alpha,\beta)$ to $[\gamma,\delta)$ is given by 
$$\begin{cases}G(x)=\lambda \cdot Id, & x \in [\gamma,\beta)\\ G(x) = 0, & \textrm{ otherwise }\end{cases} $$
for a unique choice of real number $\lambda$.  Thus, when nonzero, the collection of morphisms between interval modules are parametrized by real numbers.  In a slight abuse of notation, we sometimes write 
$$H([\alpha,\beta),[\gamma,\delta))=\lambda 1_{[\gamma,\beta)}$$
where we think of $"1"$ as the characteristic function on the specified interval.
\begin{lemma}
\label{lemma hom interval}
Let $\alpha<\beta, \gamma<\delta$.  Then, 
$$\{x|H([\alpha,\beta),[\gamma,\delta)x) \neq \{0\}\}=\begin{cases} [r,s)&\textrm{ for some }r<s \\ \emptyset .     \end{cases} $$
Moreover, the latter occurs exactly when $\delta \leq \alpha$.
\end{lemma}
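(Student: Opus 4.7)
The plan is to reduce the statement to Lemma \ref{lemma hom}  by computing the shifted module $[\gamma,\delta)\cdot x$ explicitly, and then solving the resulting system of inequalities for $x$.

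First, I would unpack the shift action. Using the formula $(M\cdot x)(y) = M(y+x)$, the support of $[\gamma,\delta)\cdot x$ consists of those $y$ with $y+x \in [\gamma,\delta)$, so $[\gamma,\delta)\cdot x = [\gamma - x,\delta - x)$ as an interval module. Lemma \ref{lemma hom} then says that $H([\alpha,\beta),[\gamma,\delta)x) \neq \{0\}$ if and only if
$$\gamma - x \;\leq\; \alpha \;<\; \delta - x \;\leq\; \beta.$$
Solving each inequality for $x$ (and remembering that the monoid action requires $x \geq 0$) converts this to
$$\max\bigl(\gamma-\alpha,\; \delta-\beta,\; 0\bigr) \;\leq\; x \;<\; \delta - \alpha.$$
So the set in question is exactly $[r,s) \cap [0,\infty)$ with $r = \max(\gamma-\alpha,\delta-\beta,0)$ and $s = \delta-\alpha$, which is itself of the form $[r,s)$ provided $r < s$.

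Next I would check the nonemptiness condition. The inequality $r < s$ unpacks into three simultaneous inequalities: $\gamma-\alpha < \delta-\alpha$, $\delta-\beta < \delta-\alpha$, and $0 < \delta-\alpha$. The first two reduce to $\gamma<\delta$ and $\alpha<\beta$, both of which hold by hypothesis. Hence the set is nonempty precisely when $\delta - \alpha > 0$, i.e.\ precisely when $\delta > \alpha$. Equivalently, the set is empty exactly when $\delta \leq \alpha$, which is the claim.

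The proof is essentially bookkeeping once the shift convention is translated correctly; the only place one can slip up is the direction of the shift (recall that $M\cdot x$ shifts to the \emph{left} by $x$, so the support of $[\gamma,\delta)\cdot x$ moves by $-x$). Everything else is a linear inequality check, so I expect no serious obstacle beyond being careful with signs.
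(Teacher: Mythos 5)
Your proof is correct, and the approach is exactly the natural one: compute the shift $[\gamma,\delta)\cdot x = [\gamma-x,\delta-x)$, feed it into Lemma~\ref{lemma hom}, and solve the resulting chain of linear inequalities for $x$. The paper does not actually supply a proof of Lemma~\ref{lemma hom interval} — it simply states it and moves on — so there is no alternative argument to compare against; your reduction supplies the missing justification. One small point worth making explicit (though it does not affect the conclusion) is that for large $x$ the shifted interval $[\gamma-x,\delta-x)$ may stick out below $0$, where persistence modules are not defined; this is harmless because $\alpha \geq 0$, so the condition $\gamma - x \leq \alpha$ already absorbs the truncation $\max(0,\gamma-x)\leq\alpha$, and if $\delta - x \leq 0$ the chain $\alpha < \delta - x$ fails automatically. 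With that observation your bookkeeping is airtight.
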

That is to say, when $M$ and $N$ are interval modules, $\{x|H(M,Nx) \neq \{0\} \}$ is an interval, which is degenerate only if $M$ and $N$ are disjoint, with $N$ {\emph{below}} $M$.  Moroever, $H([\alpha,\beta),[\alpha,\beta)x)$ is not identically $\{0\}$ on the interval $[0,\frac{\beta -\alpha}{2})$.  The number $W([\alpha,\beta))=\frac{\beta-\alpha}{2}$ is the {\emph{width}} function used in the bottleneck (or Wasserstein) distance on persistence modules.  The set of values for $x$ where $H(M,Nx)$ is not zero will occur in what follows with enough frequency to justify some notation.

\begin{definition}
Let $M, N$ be two persistence modules.  Then let 
$$S_{M,N}=\{x \geq 0| H(M,Nx)\neq \{0\}\}.$$
Thus, {\bf{Lemma} \ref{lemma hom interval}} says that if $M$ and $N$ are intervals modules, then $S_{M,N}$ is either an interval or empty.  
\end{definition}

\section{The Variety of Interleavings}
\label{section var}

\begin{definition}
\label{definition interleaving}
Let $M, N$ be two persistence modules, and say $\epsilon >0.$ An $\epsilon$-interleaving between $M$ and $N$ is a pair of morphism $\Phi:M \to N\epsilon$ and $\Psi:N \to M\epsilon$ satisfying
\begin{eqnarray}
\label{equation interleaving}
\Psi\epsilon \circ \Phi= \Pi_M^{M2\epsilon},\Phi\epsilon \circ \Psi= \Pi_N^{N2\epsilon}
\end{eqnarray}
where for a persistence module $P$, and $\tau >0,$ the morphsim $\Pi_P^{P\tau}: P \to P\tau$ is given by
$$\Pi_P^{P\tau}(x)=P(x \leq x+\tau).$$
\end{definition}

From the definition, it is clear that whether the morphisms $\Phi, \Psi$ constitute an $\epsilon$-interleaving depends only on the triple $(\Phi,\Psi,\epsilon)$ in the sense that the remainder of the interleaving diagram is {\emph{forced}}.  That is, given only $\Phi, \Psi$ and $\epsilon$ one fills in the remainder of the diagram and simply {\emph{checks}} whether the conditions (\ref{equation interleaving}) hold. 

\begin{figure}[h]
\begin{center}
\includegraphics[scale=.8]{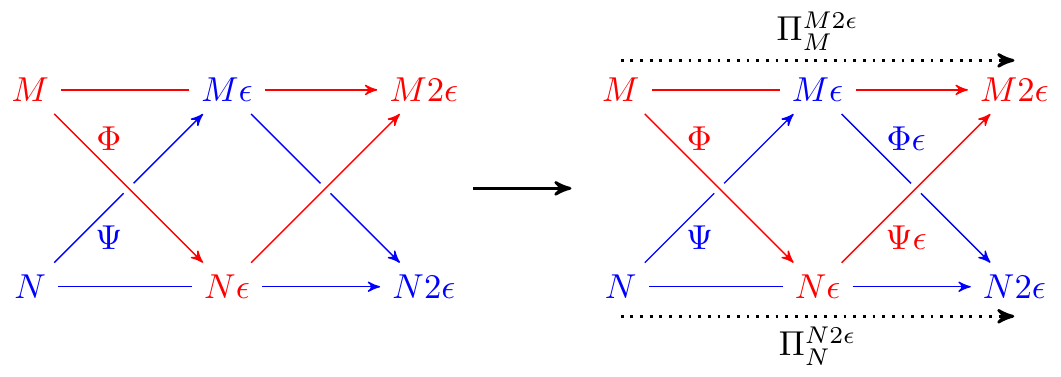}
\caption{An $\epsilon$ interleaving between $M$ and $N$}
\label{figure interleaving}
\end{center}
\end{figure}

$\textrm{If }M=\bigoplus\limits_i^m M_i \textrm{ and } N=\bigoplus\limits_j^n N_j \textrm{ using the Krull-Schmidt property, we may decompose } \Phi \textrm{ and }\Psi.$

$$\textrm{So, }\Phi=(\phi_i^j), \Psi=(\psi_j^i)\textrm{, where,    }\phi_i^j:M_j \to N_i\epsilon\textrm{ and }\psi_j^i:N_i \to M_j\epsilon.$$

If the $M_i$ and $N_j$ are interval modules, by {\bf{Lemma} \ref{lemma hom}}, any of $\phi_i^j, \psi_j^i$ which could be nonzero are given by a real parameter.  Moreover, the value of the parameter is preserved under the action of ${\mathbb{R}}_{\geq 0}$ when one shifts by $\epsilon$.  These will be the {\emph{variables}} used for the coordinate ring for our variety.  

We now define the variety.  Let $K$ and $L$ be given by 
$$K \in M_{n,m}(\mathbb{R}), K=( k_a^b)\textrm{ and }L\in M_{m,n}(\mathbb{R}), L=({\ell}_a^b) .$$

$K$ and $L$ will correspond to the decomposition of the morphisms $\Phi$ and  $\Psi$ respectively.  Thus, we identify the $k_a^b$ with ${\phi}_a^b$, and ${\ell}_a^b$ with ${\psi}_a^b$.  

Clearly the matrix for $\Pi_M^{M2\epsilon}$ is diagonal with the morphism $\Pi_{M_i}^{M_i2\epsilon}$ constituting the $(i,i)$-entry.  Moreover, it's easy to see that if  $M_i=[\alpha_i,\beta_i)$, 
$$\Pi_{M_i}^{M_i2\epsilon}= \begin{cases} 1_{[\alpha_i,\beta_i-2\epsilon)}, & \epsilon < \tfrac{\beta_i-\alpha_i}{2}\\ 0, & \textrm{ otherwise } \end{cases}$$
\noindent
Thus, we have $\Pi_{M_i}^{M_i2\epsilon} \neq 0 \iff H(M_i,M_i2\epsilon) \neq \{0\}\iff \epsilon < \tfrac{\beta_i-\alpha_i}{2} \iff 2\epsilon \in S_{M_i,M_i},$ with the analogous statement holding for the diagonal in $\Pi_N^{N2\epsilon}$.  Therefore, substituting into (\ref{equation interleaving}), our commutativity conditions become
\begin{eqnarray}
\label{equation variety}
L \cdot K = \Pi_{M}^{M2\epsilon}, K\cdot L=\Pi_{N}^{N2\epsilon},
\end{eqnarray}
subject to two qualifications.  First, many of the entries of $K$ and $L$ must be zero simply because there are no such nonzero morphisms.  Secondly, while there are $m^2+n^2$ equations involving the variables in (\ref{equation variety}), only those corresponding to entries which {\emph{could}} vary must be satisfied.  For example, the equations
$$\sum\limits_b^n k_5^b \ell_b^1=0 \textrm{ or } \sum\limits_b^n \ell_3^b k_b^3=1$$
are only constraints on those variables which appear {\emph{only}} when the sets $H(N_1,N_5 2\epsilon)$ and $H(M_3,M_32\epsilon)$ are not $\{0\}$ respectively.  With this in mind, let  
$$
\mathcal{S}=\{X_{(a,b)} | 2\epsilon \in S_{M_b,M_a}\} \cup \{ Y_{(a,b)} | 2\epsilon \in S_{N_b,N_a}\},
\mathcal{T}=\{k^a_b | \epsilon \notin S_{M_a,N_b} \}\cup \{\ell^a_b | \epsilon \notin S_{N_a,M_b}\},$$
where $X=L\cdot K-\Pi_{M}^{M2\epsilon}$, $Y=K\cdot L-\Pi_{N}^{N2\epsilon}$, and the subscript corresponds to the $(a,b)$-entry of the matrix.  Thus, $\mathcal{S}$ is the collection of matrix conditions that the variables must satisfy, and $\mathcal{T}$ corresponds to those variables which should be regarded as {\emph{missing}} from the matrices $K, L$, since they can only be zero.

Finally, we are ready to define the variety at $\epsilon$.
\begin{definition}
\label{def variety}
Using the above notation, the variety of $\epsilon$-interleavings between $M$ and $N, V_{M,N}^\epsilon$ is the affine variety whose coordinate ring given by 
$$A_{M,N}^\epsilon= \mathbb{R}[\{k_b^a\, \ell_a^n | 1 \leq a \leq m, 1 \leq b \leq n\}]\big/(\mathcal{S}\cup \mathcal{T}).$$
\end{definition}

We will now illustrate with some examples.

\begin{example}
\label{example variety 1}
Let $M=M_1 \oplus M_2, N=N_1 \oplus N_2$, where $M_1=N_1=[1,4), M_2=[1.2,3.9)$ and $N_2=[.9,4.1)$.

By the isometry theorem, the interleaving distance is $.2$ corresponding to the $.2$-matching
$$M_1 \leftrightarrow N_2 \textrm{ and }M_2 \leftrightarrow N_1. $$
We will compute the variety at the interleaving distance $\epsilon=.2$.  First, by inspection
\begin{eqnarray}
S_{M_1,N_1}=[0,3), S_{M_1,N_2}=[.1,3.1), S_{M_2,N_1}=[.1,2.8) \textrm{ and }S_{M_2,N_2}=[.1.2.9).\\ 
\textrm{Also, }S_{N_1,M_1}=[0,3), S_{N_1,M_2}=[.2,2.9), S_{N_2,M_1}=[.1,3.1) \textrm{ and }S_{N_2,M_2}=[.3,3).
\end{eqnarray}
Since only $S_{N_2,M_2}$ does not contain $\epsilon=.2$ as an element,
$$K=\begin{pmatrix}k^1_1 & k^2_1\\ k^1_2 & k^2_2\end{pmatrix}, L=\begin{pmatrix}\ell^1_1 & \ell^2_1\\ \ell^1_2 & 0\end{pmatrix}
\textrm{ and }\mathcal{T}=\{ \ell^2_2\}.$$

Moreover,
\begin{eqnarray}
S_{M_1,M_1}=[0,3), S_{M_1,M_2}=[.2,2.9), S_{M_2,M_1}=[.1,2.8),S_{M_2,M_2}=[0,2.7),\\
S_{N_1,N_1}=[0,3), S_{N_1,N_2}=[.1,3.1), S_{N_2,N_1}=[.1,3.1)\textrm{ and }S_{M_2,N_2}=[0,3.2).
\end{eqnarray}
Thus, since $.4=2\epsilon$ is in all these intervals, all matrix equations must be satisfied, and 
 $$\Pi_M^{M.4}=\begin{pmatrix}1 & 0\\0 &1 \end{pmatrix}= \Pi_N^{N.4}.$$
One can check that the variety $V_{M,N}^{.2}$ therefore corresponds to the polynomial equations
$$\ell^2_2, k^1_1, \ell^2_1 k^1_2-1, \ell^1_2 k^2_1-1, \ell^1_1 k^2_1 + \ell^2_1 k^2_2, \ell^1_1 k^1_2 + \ell^1_2 k^2_2$$
\end{example}

\begin{example}
\label{example variety 2}
We now consider the same peristence modules as in {\bf{Example} \ref{example variety 1}}, now with a different value of the parameter $\epsilon$.  This time, by $(3)$ and $(4)$, $\mathcal{T}=\emptyset$, since $\epsilon=.4$ is in all the intervals.  Also, since $2\epsilon$ is in all the intervals in $(5), (6)$, all matrix equations must be satisfied and
$$K=\begin{pmatrix}k^1_1 & k^2_1\\ k^1_2 & k^2_2\end{pmatrix},  L=\begin{pmatrix}\ell^1_1 & \ell^2_1\\ \ell^1_2 & \ell^2_2\end{pmatrix}, \Pi_M^{M.4}=\begin{pmatrix}1 & 0\\0 &1 \end{pmatrix}= \Pi_N^{N.4}.$$
Thus, our variety of interleavings $V_{M,N}^{.5}$ is now given by the equations
$$K \cdot L = \begin{pmatrix}1 & 0\\ 0 &1\end{pmatrix} = L \cdot K $$
Thus, we see that the $.4$-interleavings are parametrized by the set
$$\{(Z,Z^{-1})| Z \in Gl_2(\mathbb{R})\} \subseteq {\mathbb{R}}^8.$$
Note that all $.2$-interleavings remain $.4$-interleavings, though from $.2$ to $.4$, the variety grows in dimension.
\end{example}

\begin{example}
\label{example variety 3}
We continue, considering the same pair of persistence modules.  Now, let $\epsilon=1.55$.  As was {\bf{Example}\ref{example variety 2}}, since $\epsilon=1.55$ is in all the intervals in $(3)$ and $(4)$, $\mathcal{T}=\emptyset$.  Now, however, from $(5), (6)$, we see that $2\epsilon=3.1$ is only in $S_{N_2,N_2}$.  Thus, we see that $V_{M,N}^{1.55}$ is given by
$$k^1_2 \ell^2_2 + k^ 2_2\ell^2_2-1$$
where all other variables are free.
\end{example}

\begin{example}
\label{example variety 4}
One last example.  Now let $\epsilon=3$.  Here the variety $V_{M,N}^3$ is given by equations
$$k^1_1, k^2_1, k^2_2, \ell^1_1, \ell^1_2, \ell^2_2 $$
\end{example}
where $k^1_2, \ell^2_1$ are free.  Thus, $V_{M,N}^3$ is a plane in $8$-dimensional affine space.

\section{Some Progressions}
\label{section progr}
As we see in {\bf{Examples} \ref{example variety 1}-{\ref{example variety 4}}}, the interleavings between a fixed pair of persistence modules gives rise to a progression of varieties indexed by the parameter $\epsilon$.  It's clear that for a fixed $M$ and $N$ only finitely many different varieties actually appear in the range of the assignment
$$\epsilon \to V_{M,N}^{\epsilon}. $$
In this section, we provide examples showing how the variety of $\epsilon$-interleavings associated to two interval modules changes with the value of the parameter $\epsilon$.  When $M$ and $N$ are clear from the context, we'll write $V^i$ for the $i$th nonempty variety in the progression, suppresing $\epsilon$.  Note that since $M$ and $N$ are intervals, the matrices $K, L, \Pi_M^{M2\epsilon}$ and $\Pi_N^{N2\epsilon}$ are all scalar matrices.  Now, we'll provide some examples of progressions.

\begin{example}
\label{origin1}

$V^1$ is the origin , $V^2$ is a coordinate axis, and $V^3$ is the origin.

\begin{center}
\begin{tikzpicture}
 \draw[->]  (0,0) -- (10,0);
 \draw (0,.1) -- (0,-.1) node[below] {\scriptsize{0}};
 \draw (2,.1) -- (2, -.1) node[below] {\scriptsize{1}};
 \draw (5,.1) -- (5, -.1) node[below] {\scriptsize{6}};
  \draw (8,.1) -- (8, -.1) node[below] {\scriptsize{7}};
 
 \node at (1,1) {\includegraphics[scale=0.08]{null.png}};
 \node at (3.5,1) {\includegraphics[scale=0.08]{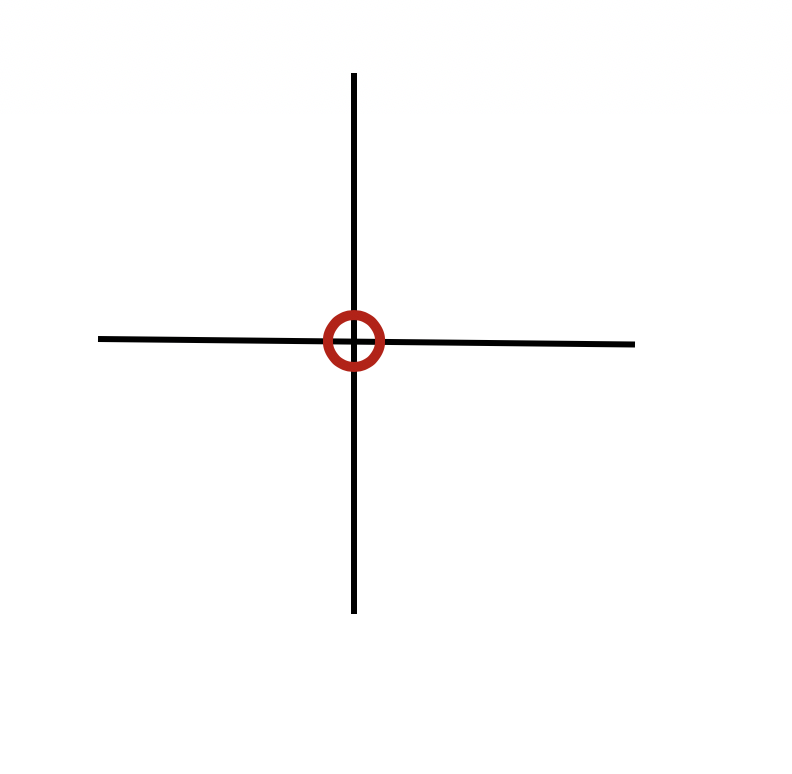}};  
 \node at (6.25,1) {\includegraphics[scale=0.08]{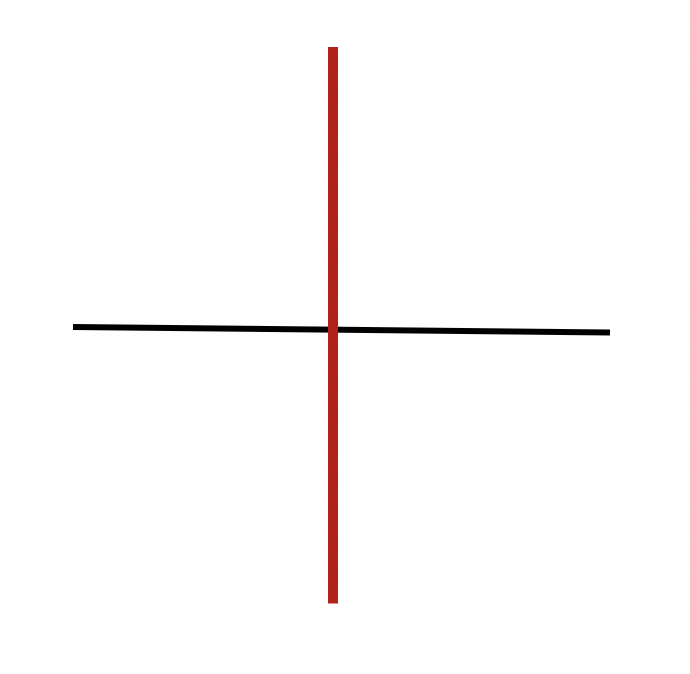}}; 
 \node at (8.25,1) {\includegraphics[scale=0.08]{origin.png}}; 
\end{tikzpicture}
\end{center}

\begin{center}
and
\vspace{.2in}

  \begin{tikzpicture}
 \draw[->]  (0,0) -- (10,0);
 \draw (0,.1) -- (0,-.1) node[below] {\scriptsize{0}};
 \draw (2,.1) -- (2, -.1) node[below] {\scriptsize{1}};
 \draw (5,.1) -- (5, -.1) node[below] {\scriptsize{6}};
  \draw (8,.1) -- (8, -.1) node[below] {\scriptsize{7}};
 
 \node at (1,1) {\includegraphics[scale=0.08]{null.png}};
 \node at (3.5,1) {\includegraphics[scale=0.08]{origin.png}};  
 \node at (6.25,1) {\includegraphics[scale=0.08]{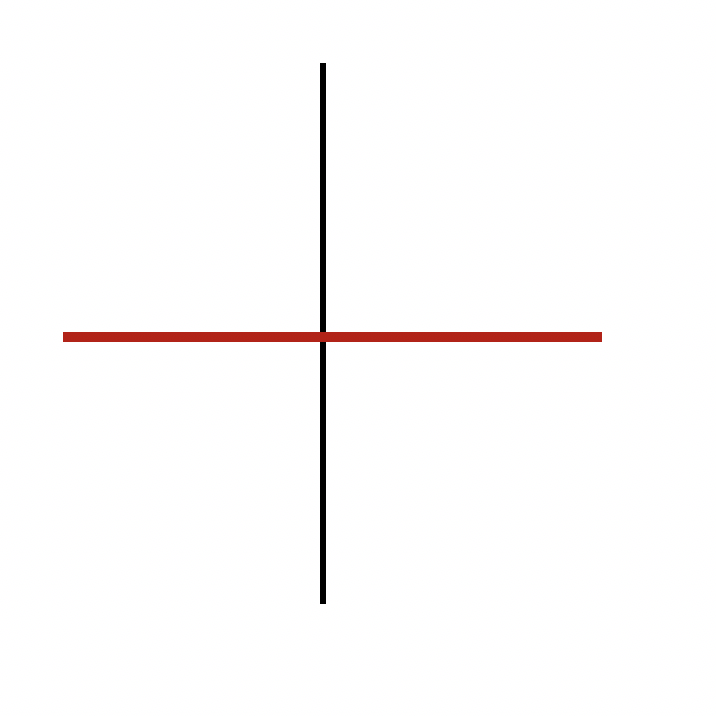}}; 
 \node at (8.25,1) {\includegraphics[scale=0.08]{origin.png}}; 
\end{tikzpicture}
\end{center}


That is, the complete progression is origin, axis, orgin.  First, let $M = [6,8)$ and $N = [1,2).$  Note that 
$$S_{M,N}=\emptyset \textrm{, and } S_{N,M}=[6,7), S_{M,M}=[0,2), S_{N,N}=[0,1).$$
Recall that the last two intervals tell us that
 $$H(M,M2\epsilon)=\begin{cases} \mathbb{R} & \epsilon \in [0,1) \\ \{0\} & \epsilon \in [1,\infty)\end{cases} \textrm{ and }\hspace{.1 in}  H(N,N2\epsilon)=\begin{cases} \mathbb{R} & \epsilon \in [0,\tfrac{1}{2}) \\ \{0\} & \epsilon \in [\tfrac{1}{2},\infty)\end{cases} $$

Now, for $ 0\leq \epsilon < 1$, 
$$K=(0), L=(0), \Pi_M^{M2\epsilon}=(1), \mathcal{T}=\{k,\ell \}. $$
Since $2\epsilon \in S_{M,M}$, our matrix condition must be satisfied.  Thus, the variety is given by
$$k,\ell, k\ell-1, $$
so we have the empty variety.  Note that this is exactly because $\epsilon$ is less than the interleaving distance of 1.

Next, for $1 \leq \epsilon < 6$\\
       $$K=(0), L=(0), \Pi_M^{M2\epsilon}=(0), \Pi_N^{N2\epsilon}=(0), \mathcal{T}=\{k,\ell\}, $$
thus no matrix conditions need to be satisfied, and $V^1$ is 
$$k, \ell $$
That is, $V^1$ is the origin in ${\mathbb{R}}^2$.

 For $6 \leq \epsilon \leq 7$\\
     $$K=(0), L=(\ell), \Pi_M^{M2\epsilon}=(0), \Pi_N^{N2\epsilon}=(0), \mathcal{T}=\{k\}, $$
 so no matrix conditions need to be satisfied, and $V^2$ is given by
 $$k $$
 so $k=0$ and $\ell$ is free.  Thus, $V^2$ is the $y$-axis.

 Lastly, for $\epsilon \geq 7$, once again\\
    $$K=(0), L=(0), \Pi_M^{M2\epsilon}=(0), \Pi_N^{N2\epsilon}=(0), \mathcal{T}=\{k,\ell\}.$$

Thus, $V^3$ is the origin again.  Of course, it's clear that by switching the roles of $M, N$ we may obtain the progression origin, x-axis, origin.\\\\

\end{example}

The next Examples are provided without justification.  The reader can easily check that each progression matches the given pair of interval modules.  
  
\begin{example}
\label{axis1}

$V^1$ is a coordinate axis, $V^2$ is the origin.\\

    \begin{center}
\begin{tikzpicture}
 \draw[->]  (0,0) -- (10,0);
 \draw (0,.1) -- (0,-.1) node[below] {\scriptsize{0}};
 \draw (3,.1) -- (3, -.1) node[below] {\scriptsize{1}};
  \draw (7,.1) -- (7, -.1) node[below] {\scriptsize{3}};
 
 \node at (1.5,1) {\includegraphics[scale=0.08]{null.png}};
 \node at (5,1) {\includegraphics[scale=0.08]{yaxis.png}};  
 \node at (8,1) {\includegraphics[scale=0.08]{origin.png}}; 
\end{tikzpicture}
\end{center}

 \begin{center}
 and 
 
 \vspace{.2in}
\begin{tikzpicture}
 \draw[->]  (0,0) -- (10,0);
 \draw (0,.1) -- (0,-.1) node[below] {\scriptsize{0}};
 \draw (3,.1) -- (3, -.1) node[below] {\scriptsize{1}};
  \draw (7,.1) -- (7, -.1) node[below] {\scriptsize{3}};
 
 \node at (1.5,1) {\includegraphics[scale=0.08]{null.png}};
 \node at (5,1) {\includegraphics[scale=0.08]{xaxis.png}};  
 \node at (8,1) {\includegraphics[scale=0.08]{origin.png}}; 
\end{tikzpicture}
\end{center} 

\vspace{1cm}

Let $M = [1,3), N = [0,2)$ and $M=[0,2), N=[1,3)$ respectively.

\end{example}

\begin{example}
\label{hyperbola1}


$V^1$ is a hyperbola, $V^2$ is a plane, $V^3$ is a coordinate axis and $V^4$ is the origin.\\

\begin{center}
\begin{tikzpicture}
 \draw[->]  (0,0) -- (10,0);
 \draw (0,.1) -- (0,-.1) node[below] {\scriptsize{0}};
 \draw (2,.1) -- (2, -.1) node[below] {\scriptsize{.2}};
 \draw (4,.1) -- (4, -.1) node[below] {\scriptsize{.7}};
  \draw (6,.1) -- (6, -.1) node[below] {\scriptsize{1.2}};
    \draw (8,.1) -- (8, -.1) node[below] {\scriptsize{1.3}};
 
 \node at (1,1) {\includegraphics[scale=0.08]{null.png}};
 \node at (3,1) {\includegraphics[scale=0.08]{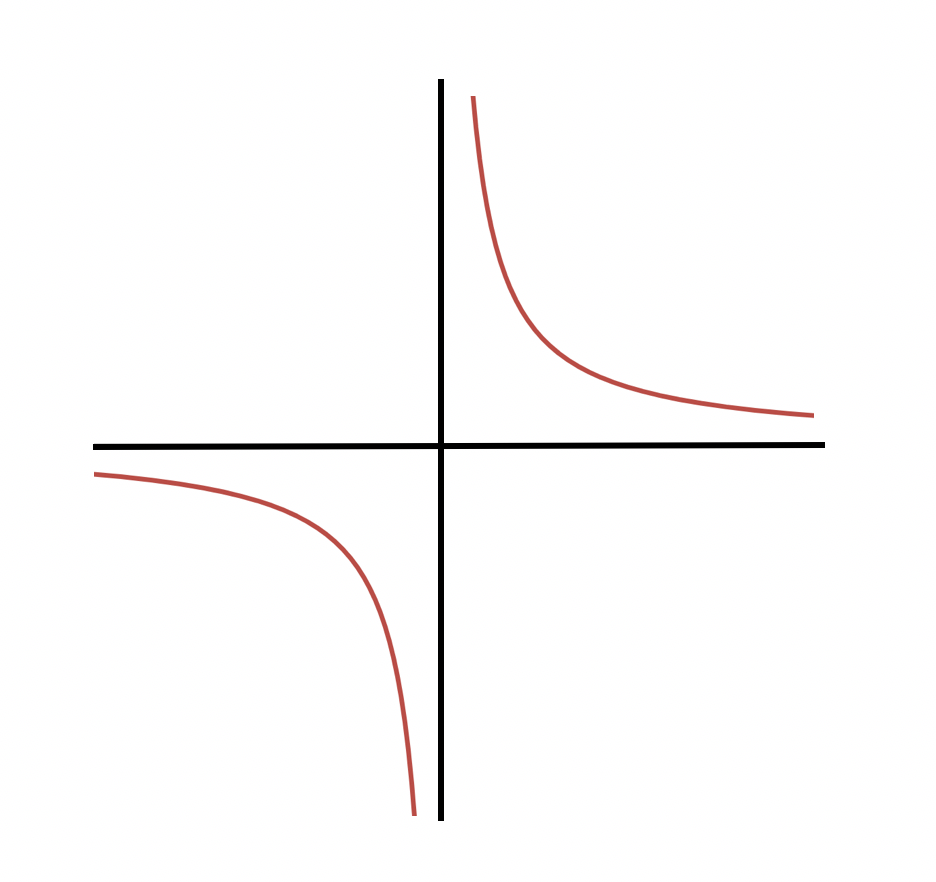}};  
 \node at (5,1) {\includegraphics[scale=0.08]{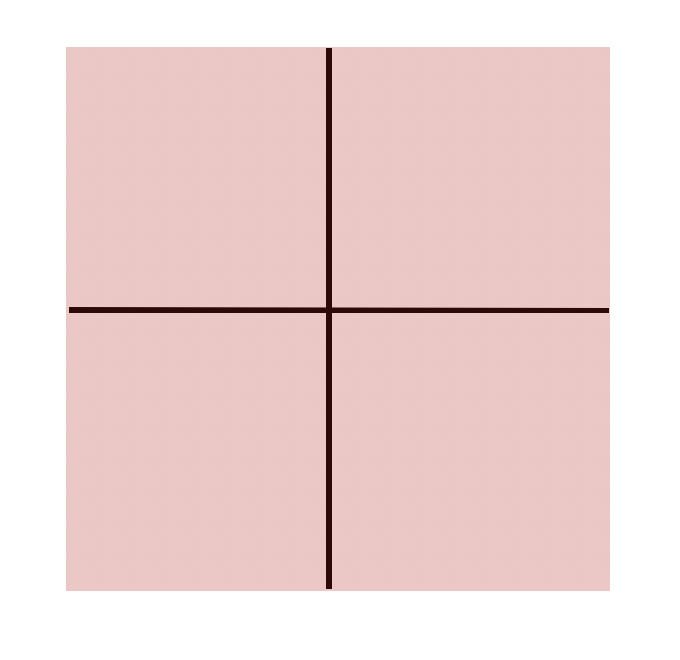}}; 
  \node at (7,1) {\includegraphics[scale=0.08]{yaxis.png}}; 
 \node at (9,1) {\includegraphics[scale=0.08]{origin.png}}; 
\end{tikzpicture}
\end{center}

\begin{center}
and

\vspace{.2in}

\begin{tikzpicture}
 \draw[->]  (0,0) -- (10,0);
 \draw (0,.1) -- (0,-.1) node[below] {\scriptsize{0}};
 \draw (2,.1) -- (2, -.1) node[below] {\scriptsize{.2}};
 \draw (4,.1) -- (4, -.1) node[below] {\scriptsize{1.1}};
  \draw (6,.1) -- (6, -.1) node[below] {\scriptsize{1.2}};
    \draw (8,.1) -- (8, -.1) node[below] {\scriptsize{1.3}};
 
 \node at (1,1) {\includegraphics[scale=0.08]{null.png}};
 \node at (3,1) {\includegraphics[scale=0.08]{1_x.png}};  
 \node at (5,1) {\includegraphics[scale=0.08]{R2.png}}; 
  \node at (7,1) {\includegraphics[scale=0.08]{xaxis.png}}; 
 \node at (9,1) {\includegraphics[scale=0.08]{origin.png}}; 
\end{tikzpicture}
\end{center}

Let $M = [1,2.1), N = [.8,2.2)$ and $M=[.8,2.2), N=[1,2.1)$ respectively.

\end{example}

\begin{example}
\label{hyperbola2}


$V^1$ is a hyperbola, $V^2$ is the plane and $V^3$ is the origin.\\

Let M = [.9,2.1), N = [1,2)

    \begin{center}
\begin{tikzpicture}
 \draw[->]  (0,0) -- (10,0);
 \draw (0,.1) -- (0,-.1) node[below] {\scriptsize{0}};
 \draw (2,.1) -- (2, -.1) node[below] {\scriptsize{.1}};
 \draw (5,.1) -- (5, -.1) node[below] {\scriptsize{.5}};
  \draw (8,.1) -- (8, -.1) node[below] {\scriptsize{.9}};
 
 \node at (1,1) {\includegraphics[scale=0.08]{null.png}};
 \node at (3.5,1) {\includegraphics[scale=0.08]{1_x.png}};  
 \node at (6.25,1) {\includegraphics[scale=0.08]{R2.png}}; 
 \node at (8.5,1) {\includegraphics[scale=0.08]{origin.png}}; 
\end{tikzpicture}
\end{center} 

\vspace{1cm}

\end{example}

In the next Section we will show that {\bf{Examples \ref{origin1},\ref{axis1},\ref{hyperbola1}}} and {\bf{\ref{hyperbola2}}} constitute a complete list of all possible progressions between two interval modules.  We will also connect the first variety $V^1$ to $m_1$ and $m_2$ (see (\ref{equation distance})), the two terms whose minimum is the interleaving distance.  We now prove a useful Proposition which provides a physical interpretation to $m_1$.
 
\begin{prop}
\label{prop hom}
Let $M=[a,b)$ and $N=[c,d)$ and set 

$$\sigma=\left\{
                \begin{array}{ll}
                  min(S_{M,N}), \text{if} \hspace{1mm} S_{M,N} \neq \emptyset\\
                  0, \hspace{16mm} \text{if} \hspace{1mm}S_{M,N} = \emptyset
                \end{array}
              \right.
\tau=\left\{
                \begin{array}{ll}
                  min(S_{N,M}), \text{if} \hspace{1mm} S_{N,M} \neq \emptyset\\
                  0, \hspace{16mm} \text{if} \hspace{1mm}S_{N,M} = \emptyset
                \end{array}
              \right.
$$

and similarly
$$\sigma'=\left\{
                \begin{array}{ll}
                  sup(S_{M,N}), \text{if} \hspace{1mm} S_{M,N} \neq \emptyset\\
                  0, \hspace{16mm} \text{if} \hspace{1mm}S_{M,N} = \emptyset
                \end{array}
              \right.
              \tau'=\left\{
                \begin{array}{ll}
                  sup(S_{N,M}), \text{if} \hspace{1mm} S_{N,M} \neq \emptyset\\
                  0, \hspace{16mm} \text{if} \hspace{1mm}S_{N,M} = \emptyset
                \end{array}
              \right.
_.$$

\noindent
Thus, $Hom(N,Mx)\neq 0 \iff \sigma \leqslant x < \sigma'$ and $S_{N,M}\neq \emptyset$ and similarly $Hom(M,Nx)\neq 0 \iff \tau \leqslant x < \tau'$ and $S_{M,N}\neq \emptyset$.
\newline
\noindent 
Then,
\begin{enumerate}
\item[i.] $max\{|a-c|,|b-d|\}=max \{\sigma, \tau\}$.  So $max\{|a-c|,|b-d|\}$ corresponds to the number where the last homomorphism between $M$ and $N$ is born.
\item[ii.]  $max\{|a-d|,|b-c|\}=max \{\sigma', \tau'\}$.  So ${max}\{|a-d|,|b-c|\}$ corresponds to the number where the last homomorphism between $M$ and $N$ dies.
\end{enumerate}
\end{prop}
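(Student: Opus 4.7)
The plan is to reduce everything to Lemma \ref{lemma hom} and then compute the sets $S_{M,N}$ and $S_{N,M}$ explicitly as intervals, at which point (i) and (ii) become routine case checks.

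\medskip

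\noindent\textbf{Step 1: Compute $S_{M,N}$ and $S_{N,M}$ as explicit intervals.} Since $(Nx)$ is supported on $[c-x, d-x)$, Lemma \ref{lemma hom} tells me that $H(M, Nx) \neq \{0\}$ exactly when $c - x \leq a < d - x \leq b$. These three inequalities translate to $x \geq c - a$, $x < d - a$, and $x \geq d - b$. Combining with $x \geq 0$, I get
$$S_{M,N} = \bigl[\max\{c-a,\, d-b,\, 0\},\ d-a\bigr) \text{ if } d > a,\qquad S_{M,N} = \emptyset \text{ if } d \leq a.$$
By the symmetric computation, $S_{N,M} = [\max\{a-c, b-d, 0\},\, b-c)$ when $b > c$, and empty otherwise. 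In particular, $\sigma = \max\{c-a, d-b, 0\}$ and $\sigma' = d - a$ whenever $S_{M,N}$ is nonempty, and analogously for $\tau, \tau'$.

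\medskip

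\noindent\textbf{Step 2: Prove (i).} Because $a < b$ and $c < d$, the conditions $d \leq a$ (emptiness of $S_{M,N}$) and $b \leq c$ (emptiness of $S_{N,M}$) cannot hold simultaneously, so at most one of the two sets is empty. When both are nonempty, I compute
$$\max\{\sigma, \tau\} \;=\; \max\{c-a,\, d-b,\, 0,\, a-c,\, b-d,\, 0\} \;=\; \max\{|a-c|,\, |b-d|\},$$
because each of $|a-c|$ and $|b-d|$ appears as either the positive or negative expression in the list. When say $S_{M,N} = \emptyset$ (so $d \leq a$), then $\sigma = 0$, and both $a - c > 0$ and $b - d \geq 0$ are forced; in this regime $|a-c| = a - c$ and $|b-d| = b - d$, so $\max\{\sigma, \tau\} = \tau = \max\{a-c, b-d\} = \max\{|a-c|, |b-d|\}$. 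The case $S_{N,M} = \emptyset$ is symmetric.

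\medskip

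\noindent\textbf{Step 3: Prove (ii).} In the both-nonempty case, $\max\{\sigma', \tau'\} = \max\{d-a, b-c\}$; since each is nonnegative here, this equals $\max\{|a-d|, |b-c|\}$. When $S_{M,N} = \emptyset$ (so $d \leq a$), I have $\sigma' = 0$ and $|a-d| = a - d$; I verify that $(b-c) - (a-d) = (b-a) + (d-c) > 0$, hence $\tau' = b - c$ dominates both $\sigma'$ and $|a-d|$, giving $\max\{\sigma', \tau'\} = b - c = \max\{|a-d|, |b-c|\}$. The case $S_{N,M} = \emptyset$ is again symmetric.

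\medskip

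The only real obstacle is bookkeeping around the $=0$ convention when one of the $S$-sets is empty; the rest is direct substitution into Lemma \ref{lemma hom}. The interpretation (``last homomorphism is born / dies'') is then immediate from the fact that $S_{M,N}$ and $S_{N,M}$ are right-open intervals starting at $\sigma, \tau$ respectively and ending at $\sigma', \tau'$.
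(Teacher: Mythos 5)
Your proof is correct, and it takes a genuinely different route from the paper's. The paper proves Proposition~\ref{prop hom} by splitting into three cases according to how the supports of $M$ and $N$ sit relative to one another (disjoint, overlapping with neither contained in the other, one nested in the other), and within each case re-derives the endpoints of $S_{M,N}$ and $S_{N,M}$ by hand via Lemma~\ref{lemma hom}. You instead translate the condition $H(M,Nx)\neq\{0\}$ directly into the three inequalities $x\geq c-a$, $x<d-a$, $x\geq d-b$, yielding the closed-form $S_{M,N}=[\max\{c-a,d-b,0\},\,d-a)$ (nonempty iff $d>a$), and likewise for $S_{N,M}$. This collapses the support-configuration case analysis into a single algebraic identity. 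In fact your Step 2 can be streamlined further: since $c-a$ and $d-b$ are both negative when $d\leq a$, and similarly for the other side, the formulas $\sigma=\max\{c-a,d-b,0\}$ and $\tau=\max\{a-c,b-d,0\}$ hold unconditionally (including the empty-set case), so $\max\{\sigma,\tau\}=\max\{|a-c|,|b-d|,0\}=\max\{|a-c|,|b-d|\}$ with no case split at all; a similar observation ($\sigma'=\max\{d-a,0\}$, $\tau'=\max\{b-c,0\}$) shortens Step 3. Your approach buys a shorter, more uniform derivation; the paper's case-by-case version has the pedagogical advantage of making the geometric picture of each support configuration explicit, and its intermediate computations of $S_{M,N}$ and $S_{N,M}$ in each case are re-used in the proof of Lemma~\ref{lemma dead hom} and Theorem~\ref{theorem main}.
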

\vspace{5mm}
\noindent
\begin{proof}
We proceed in three main cases, conditioning on the way the supports of $M$ and $N$ compare.  In the proof, we use {\bf{Lemmas} \ref{lemma hom},\ref{lemma hom interval}} repeatedly without explicit reference.

{\bf{Case 1:}} The supports are disjoint.  That is, $M \cap N =\emptyset$.  

Without loss of generality, suppose that $a<b \leq c <d$.  Then, $S_{N,M} =  \emptyset$, so  $\tau, \tau'$ are both identically zero.  Therefore, $max\{\sigma, \tau\}=\sigma$ and $max\{\sigma', \tau'\}=\sigma'$.  Thus, we will show that
$$\sigma=max\{|a-c|,|b-d|\}=max\{c-a,d-b\} \textrm{ and }\sigma'=max\{|a-d|,|b-c|\}=d-a.$$

First, say $c-a\geq d-b$.  If $x=c-a$, then $x \in S_{M,N}$, since $H(M,Nx)=H([a,b),[a,d-c+a)\neq \{0\}$, as
 $$a \leq a < d-c+a \leq b.$$ 

Thus, $\sigma \leq c-a$.  But if $\sigma < c-a $, then for some  $x <c-a$, $H([a,b),[c-x,d-x)) \neq \{0\}$.  If this were the case, then  $c-x \leq a$, a contradiction.  Thus we have shown that $\sigma = c-a$.

Now set $x=d-a$.  Then $H([a,b),[c-x,d-x))=H([a,b),[c-d+a,d-d+a))=\{0\}$, since $a \nless a$.  Thus, $\sigma' \leq d-a$.  However, if $\sigma' < d-a $, then there exists $x\in (c-a,d-a)$ with $H([a,b),[c-x,d-x))=\{0\}$.  However, for any such $x, b+x \geq b+c-a \geq b+d-b=d$.  Therefore, 
$$c-x\leq a<d-x\leq b$$ 
Thus $i. ii.$ are proven if $c-a \geq d-b$.

Now, say $d-b > c-a$, and let $x=d-b$.  Then $x \in S_{M,N}$ since 
$$c-x \leq c-c+a=a<b=d-x\leq b.$$  
Thus $\sigma \leq d-b$.  But if $\sigma < d-b$, then there exists $x < d-b$ with $x \in S_{M,N}$  This is a contradiction since $b<d-x$.  Therefore $\sigma = d-b$.  

Moreover, $H([a,b), [c-d+a,a))=\{0\}$, since $a \nless a$ so $\sigma' < d-a$.  However for $x$ with $c-a\leq d-b < x < d-a$, $H([a,b),[c-x,d-x))=\{0\}$, since $c-x \leq a < d-x \leq b$.  Thus, $\sigma'=d-a$.  Our result is therefore established in {\bf{Case 1}}.

\medskip 

\noindent 
{\bf{Case 2:}} The supports intersect, but neither one contains the other.  Explicitly, $M \cap N \neq \emptyset, M \nsubseteq N$ and $N \nsubseteq M$.

Without loss of generality, suppose $a \leq c < b \leq d$.  In this situation, $0 \in S_{N,M}$, so $\tau = 0$ and $S_{M,N} \neq \emptyset$.  Thus, to prove $i.$ we must show that $max\{c-a,d-b\}=\sigma$.  First, suppose that $c-a \geq d-b$.  Then, $H([a,b),[c-c+a,d-c+a))=H([a,b),[a,a+d-c)) \neq \{0\}$ since 
$$a\leq a <a+d-c \leq b.$$
This says that $\sigma \leq c-a$.  On the other hand, if $\sigma < c-a$, then for some  $x < c-a, x \in S_{M,N}$.  If this were the case, then 
$$c-x \leq a <d-x \leq b,$$
a contradiction, since $c-x \geq a$.  Thus, $i.$ is established, when $c-a \geq d-b.$ 

Similarly, say $d-b \geq c-a$.  Then, $H([a,b),[c-d+b,d-d+b))=H([a,b),[b+c-d,b)) \neq \{0\}$, since 
$$b+c-d \leq a <b \leq b.$$
Thus we have that $\sigma \leq d-b$.  If $\sigma < d-b$, then for some $x <d-b, H([a,b),c-x,d-x)) \neq \{0\}$.   If this were true, then we must have 
$$c-x \leq a < d-x \leq b,$$ 
a contradiction, since $x \geq d-b.$  Thus, $\sigma = max\{c-a,d-b\}$, so $i.$ holds.

To establish $ii.$, we'll show that $\tau' = b-c$ and $\sigma'=d-a$.  First, $H([c,d),[a-b+c,b-b+c))=H([c,d),[a-b+c,c))=\{0\}$, since $c \nless c$, thus $\tau' \leq b-c$.  But if $\tau' < b-c$, then for some $x <b-c$ we have $H([c,d),[a-x,b-x))=\{0\}$.  However, for any such $x$,  
$$a-x < c<b-x\leq d,$$
a contradction, since $x \in S_{N,M}$.  Thus $\tau'=b-c$.  Lastly $H([a,b),[c-d+a,d-d+a))=H([a,b),[c-d+a,a))=\{0\}$, since $a \nless a$.  On the other hand, if $max\{c-a,d-b\} \leq x <d-a$, $H([a,b),[c-x,d-x)) \neq \{0\}$, since 
$$c-x \leq a <d-x \leq b.$$ 
Therefore, $\sigma'=d-a$ and $i. ii.$ are proven in {\bf{Case 2}}.

\medskip

\noindent 
{\bf{Case 3:}} One support contains the other.  That is $N\subseteq M$ or $M \subseteq N$.

Without loss of generality, say $M \supseteq N$ and $a \leqslant c < d \leqslant b$.  First say $c-a \leq b-d$.  Then, $c-a+d-c \leq b-d+d-c\implies d-a \leq b-c$.  We'll show 
$$S_{M,N}=[c-a,d-c) \textrm{, and }S_{N,M}= [b-d,b-c).$$
Note that $H([a,b),[c-c+a,d-c+a))=H([a,b),[a,a-d+c)) \neq \{0\}$, since 
$$a \leq a <a+d-c\leq b.$$
Also, for $x<c-a, H([a,b),[c-x,d-x))=\{0\}$, since $c-x>a$.  Thus $\sigma =c-a$.  Moreover, $H([a,b),[c-d+a,d-d+a))=\{0\}$, since $a \nless a$.  But for $x \in (c-a,d-a)$, $H([a,b),[c-x,d-x)) \neq \{0\}$ since 
$$c-x\leq a <d-x<d-c+a<a+d-c+c-a=d<b,$$ 
so $S_{M,N}=[c-a,d-a)$.

In addition, $H([c,d),[a-b+d,b-b+d))=H([c,d),[a-b+d,d))\neq \{0\}$, since 
$$a-b+d\leq d+c-d=c<d\leq d.$$
If $x <b-d$, then $H([c,d),[a-x,b-x))=\{0\}$, since $b-x >d$.  Thus, $\tau = b-d$.  For $x=b-c$ we have $H([c,d),[a-b+c,b-c+c))=H([c,d),[a-b+c,c))=\{0\}$, since $c \nless c$.  However, for $x\in [b-d,b-c)$, $H([c,d),[a-x,b-x)) \neq \{0\}$ since 
$$a-x\leq a-b+d \leq d+c-d = c <b-x \leq d.$$  Thus, $\tau'=b-c$ and $S_{N,M}=[b-d,b-c)$ as required.

The case where $b-d <c-a$ is similar.
\end{proof}

We point out that the proof also shows us that the last homomorphism {\emph{born}} is the last to {\emph{die.}}  When $M=[a,b)$ and $N=[c,d)$, from (\ref{equation distance}) we know that the interleaving distance $D$ between $M$ and $N$ is given by
\begin{equation*}
D= min \left\{ max\{|a-c|,|b-d|\}, max\left\{ \tfrac{b-a}{2},\tfrac{d-c}{2} \right\} \right\}={min\{m_1,m_2\}}_. 
 \end{equation*}
Thus, {\bf{Proposition} \ref{prop hom}} and {\bf{Lemma} \ref{lemma hom interval}} give us another way of interpreting the terms appearing in this formula for the interleaving distance, since $m_1$ is where the last homomorphism is born, and $m_2$ is the smallest number such that both ${\Pi}_M^{M2m_2}$ and ${\Pi}_N^{N2m_2}$ are identically zero.
 
 \section{Main Results}
 \label{section main}
 In this Section we show that the Examples in Section \ref{section progr} constitute an exaustive list of all possible progressions.  We will also show that the first variety in a progression $V^1$, detects which whether the interleaving distance comes from a matching between the intervals $M$ and $N$.
 
 \begin{lemma}
 \label{lemma dead hom}
As above, let $D$ be the interleaving distance between $M$ and $N$ and again, let
$$ m_1 = max\{|a-c|,|b-d|\} \textrm{, }
 m_2 = max\left\{ \tfrac{b-a}{2},\tfrac{d-c}{2} \right\} $$  
If $m_1 \geq m_2$, then only one homomorphism will appear.  That is, exactly one of $S_{M,N}, S_{N,M}$ intersects $[D,\infty)$.
 \end{lemma}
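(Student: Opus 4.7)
Since the hypothesis $m_1 \geq m_2$ implies $D = m_2$, the lemma asserts that exactly one of $S_{M,N} = [\sigma,\sigma')$, $S_{N,M} = [\tau,\tau')$ meets $[m_2,\infty)$. My plan is to proceed by case analysis on the configuration of the supports of $M=[a,b)$ and $N=[c,d)$, using \textbf{Proposition}~\ref{prop hom} and \textbf{Lemma}~\ref{lemma hom interval} to describe each of $S_{M,N}, S_{N,M}$ explicitly as a half-open interval (or $\emptyset$) with endpoints in $a,b,c,d$.

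The ``at least one meets $[D,\infty)$'' direction is immediate: by \textbf{Proposition}~\ref{prop hom}(i), $m_1 = \max\{\sigma,\tau\}$, so whichever of $S_{M,N}, S_{N,M}$ realizes this maximum contains the element $m_1 \geq D$. The substance of the lemma is therefore the ``at most one'' direction, which I would recast as: the set whose birth time is the \emph{smaller} of $\sigma, \tau$ has death time ($\sigma'$ or $\tau'$) at most $m_2$.

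If the supports are disjoint, one of the two sets is already empty and nothing further is needed. If $N \subseteq M$, WLOG $a \leq c < d \leq b$, I would read off $S_{M,N} = [c-a, d-a)$ and $S_{N,M} = [b-d, b-c)$ from (the proof of) \textbf{Proposition}~\ref{prop hom}; in each sub-case the required bound on the earlier-born supremum rearranges directly into $m_1 \geq m_2$. For instance, if $c-a \leq b-d$ (so $m_1 = b-d$), the needed bound $d-a \leq (b-a)/2$ is literally the identity $b-d \geq (b-a)/2 = m_2$. The overlapping-non-nested case is the heart of the argument: WLOG $a \leq c < b \leq d$, and then $S_{M,N} = [\max\{c-a, d-b\}, d-a)$ and $S_{N,M} = [0, b-c)$, so the early-born set is always $S_{N,M}$ and I must verify $b - c \leq m_2$ under $m_1 \geq m_2$.

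The main obstacle I anticipate is this last verification, which splits into four sub-cases depending on which of $c-a, d-b$ realizes $m_1$ and which of $(b-a)/2, (d-c)/2$ realizes $m_2$. Two sub-cases collapse to a one-line rearrangement (e.g., $c-a \geq (b-a)/2 \iff c \geq (a+b)/2 \iff b-c \leq (b-a)/2$). The other two rest on the identity
\[
\bigl((b-a) - (d-c)\bigr) + \bigl((d-b) - (c-a)\bigr) = 0,
\]
which forces those configurations into the shifted-copy regime $b-a = d-c$ (equivalently $c-a = d-b$); in that regime the desired inequality becomes a tight equality extracted from $m_1 \geq m_2$. Assembling the four sub-cases completes the proof.
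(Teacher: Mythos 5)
Your proposal is correct and takes essentially the same approach as the paper: split on the three support configurations (disjoint, overlapping non-nested, nested), read off $S_{M,N}$ and $S_{N,M}$ as explicit half-open intervals from the proof of Proposition~\ref{prop hom}, and rearrange $m_1 \geq m_2$ into the required bound on the early-born set's supremum. The only cosmetic difference is in the overlapping non-nested case, where the paper argues by contradiction (supposing $b-c > m_2$ forces $m_1 < m_2$) while you argue directly through four sub-cases; these are logically equivalent, and your telescoping-identity observation simply shows that sub-cases (c) and (d) collapse into the two one-line rearrangements that the paper also uses.
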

 
 \begin{proof}
 First, the conclusion is clear whenever $M\cap N=\emptyset$ independent of the relationship between $m_1$ and $m_2$, since in this situation exactly one of $S_{M,N}, S_{N,M}$ is not empty.  In fact, if the supports of $M$ and $N$ are disjoint, then necessarily $m_1 > m_2$.  Now, say $m_1 > m_2$.  
 
 First, suppose the support of $M$ and $N$ intersect, but neither contains the other.  Without loss of generality, say $a < c <b <d$.  Then, 
 $$S_{N,M}=[0,b-c) \textrm{, and }S_{M,N}=[m_1,d-a). $$
 We must show that $D=m_2 \geq b-c$.  For a contradiction, suppose that $\frac{b-a}{2}, \frac{d-c}{2}<b-c$.  However $b-c > \frac{d-c}{2}$ implies $d-b<\frac{d-c}{2}$ and $b-c >\frac{b-a}{2}$ implies $c-a <\frac{b-a}{2}$ by the pidgeonhole principle.  This means $m_1 < m_2$, a contradiction.
 
 Lastly, suppose that the support of one contains the other.  Without loss of generality, say $M\supseteq N$, so $a\leq c <d \leq b$.  Then,
 $$S_{M,N}=[c-a,d-a) \textrm{, and } S_{N,M}=[b-d,b-c). $$
 Since $m_1 \geq m_2$, 
 $$m_2=\tfrac{b-a}{2}\leq max\{c-a,b-d\}. $$
 If the maximum is $c-a$, then $c-a \geq \frac{b-a}{2} \geq b-c$ by the pidgeonhole principle.  Similarly, if the maximum is $b-d$, then $b-d \geq \frac{b-a}{2} \geq d-a$, as well.  Thus the result holds in all permissible cases.
 \end{proof}
We are now ready to prove our Main Result.
\begin{theorem}
\label{theorem main}
The previous list of examples is exhaustive.  Moreover, we have the following results:
\begin{enumerate}
\item[i.] $m_1 > m_2 \iff V^1$ is the origin $\iff$ the full progression is origin, axis, origin. 
\item[ii.] If $m_1=m_2 \iff V^1$ is an axis $\iff$ the full progression is axis, origin. 
\item[iii.] If $m_1 < m_2 \iff V^1$ is a hyperbola $\iff$ the full progression is hyperbola, plane, axis, origin or hyperbola, plane, origin.  
\end{enumerate}
\end{theorem}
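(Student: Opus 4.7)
Since $M=[a,b)$ and $N=[c,d)$ are interval modules, Lemma \ref{lemma hom} collapses each of $K, L$ to a single scalar: write $K=(k)$ and $L=(\ell)$, so every $V_{M,N}^{\epsilon}$ sits inside $\mathbb{R}^2$. The structure of $V_{M,N}^{\epsilon}$ is then controlled by just two questions: for each of $k,\ell$, is it free or forced to zero (via $\mathcal{T}$); and is a matrix equation $k\ell=1$ being imposed (via $\mathcal{S}$). These are answered by locating $\epsilon$ relative to the four intervals $S_{M,N}=[\sigma,\sigma')$, $S_{N,M}=[\tau,\tau')$, $S_{M,M}=[0,(b-a)/2)$, and $S_{N,N}=[0,(d-c)/2)$. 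In particular, both $\Pi$'s vanish precisely when $\epsilon\geq m_2$, and Proposition \ref{prop hom} identifies $m_1=\max\{\sigma,\tau\}$, with $\sigma',\tau'$ marking the deaths of the two hom directions.

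With this dictionary, the plan is to trace the progression on $\epsilon\geq D$ in each of three cases on the sign of $m_1-m_2$. When $m_1>m_2$, we have $D=m_2$ and Lemma \ref{lemma dead hom} gives a unique surviving hom direction, say $S_{M,N}$, with $\sigma=m_1>m_2\geq\tau'$. No matrix condition holds past $D$, so for $\epsilon\in[m_2,\sigma)$ both variables are forced to zero (origin); for $\epsilon\in[\sigma,\sigma')$ only $k$ is free ($k$-axis); for $\epsilon\geq\sigma'$ both return to zero (origin). This yields the progression origin, axis, origin. When $m_1=m_2$, Lemma \ref{lemma dead hom} again gives a unique surviving direction, and since that direction must be the one born at $m_1$ (otherwise $\tau'\leq m_1$ forces $\tau<m_1=\sigma$ and the surviving hom still opens at $m_1$), the progression is axis, origin.

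The case $m_1<m_2$ is the richest. Here $D=m_1$, and since $m_1=\max\{\sigma,\tau\}$ both $k,\ell$ are free starting from $\epsilon=m_1$; I also need to verify that neither hom dies by $\epsilon=m_2$, i.e. $\min\{\sigma',\tau'\}>m_2$. With $\sigma',\tau'$ explicit from Proposition \ref{prop hom}, this comes down to a short arithmetic check in the overlapping-non-nested and nested support subcases (disjoint supports force $m_1>m_2$ and don't arise). Granted this, for $\epsilon\in[m_1,m_2)$ at least one condition $k\ell=1$ is imposed and both variables are free, giving the hyperbola $V^1=\{k\ell=1\}$; for $\epsilon\in[m_2,\min\{\sigma',\tau'\})$ no matrix condition holds and both variables are free, giving the plane $\mathbb{R}^2$; for $\epsilon\in[\min\{\sigma',\tau'\},\max\{\sigma',\tau'\})$ one variable is killed off and $V^{\epsilon}$ collapses to an axis; and for $\epsilon\geq\max\{\sigma',\tau'\}$ to the origin. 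The axis step is skipped precisely when $\sigma'=\tau'$, producing the two progressions listed.

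The main technical obstacle is verifying $\min\{\sigma',\tau'\}>m_2$ in case (iii), since that is what guarantees the plane regime is nonempty and separates the hyperbola from the axis cleanly; everything else is bookkeeping. Once all three progressions have been verified, the stated biconditionals are immediate: the trichotomy in $m_1$ vs.\ $m_2$ matches the trichotomy of shapes origin/axis/hyperbola for $V^1$, and each shape for $V^1$ then determines the remainder of the progression uniquely up to the one ambiguity described in (iii).
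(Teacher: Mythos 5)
Your proof is correct and follows essentially the same route as the paper: a trichotomy on $m_1$ versus $m_2$, with Proposition \ref{prop hom} supplying $m_1 = \max\{\sigma,\tau\}$ and the death times $\sigma',\tau'$, Lemma \ref{lemma dead hom} controlling the surviving hom direction when $m_1 \geq m_2$, and the remaining content being the arithmetic check that $\min\{\sigma',\tau'\} > m_2$ when $m_1 < m_2$ so the hyperbola--plane transition is clean. Your $\sigma/\tau/\sigma'/\tau'$ bookkeeping is a slightly more uniform presentation than the paper's explicit inequalities in $a,b,c,d$, but the underlying argument, the lemmas invoked, and the one genuinely computational step you flag are the same as the paper's.
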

Thus, in particular, $V^1$ can detect whether the interleaving distance $\epsilon$ comes from $m_1$ or $m_2$.  Note that $i$ corresponds to {\bf{Example \ref{origin1}}}, $ii.$ corresponds to {\bf{Example \ref{axis1}}}, and $iii.$ to {\bf{Examples \ref{hyperbola1}, \ref{hyperbola2}}}.
\begin{proof}
We will proceed in cases.

First, suppose $m_1 > m_2$.  Then,
\[ max\{|a-c|,|b-d|\} > max\left\{ \tfrac{b-a}{2},\tfrac{d-c}{2} \right\}, \]
so the interleaving distance $\epsilon$ is $m_2$.  Without loss of generality, assume $ \epsilon = \frac{b-a}{2}$, so we have 
$$\tfrac{d-c}{2} \leq \tfrac{b-a}{2} \textrm{, and either}  \tfrac{b-a}{2} < |a-c| 
\text{ or }  \tfrac{b-a}{2} < |b-d|.$$

At $\epsilon=\frac{b-a}{2}$, we'll first show that $V^1$ is the origin.  First, clearly at $\epsilon, H(M,M2\epsilon)\text{ and } H(N,N2\epsilon) $ are both $\{0\}$.  Suppose $\frac{b-a}{2} < |a-c|$.  Then, we need only show that $H(M,N\epsilon)=\{0\}=H(N,M\epsilon)$.
But, if $c>a$, then $H(M,N.\epsilon)=\{0\} \text{ since } c-\epsilon>a$.  Moreover, $H(N,M\epsilon)=\{0\} \text{ since } b-\epsilon<a $.  

On the other hand, if $c<a$, then $H(M,N\epsilon)=\{0\} \text{ since } d-\epsilon<a$ and $H(N,M\epsilon)=\{0\} \text{ since } a-\epsilon>c$.  Similarly, if $\frac{b-a}{2} < |b-d|$, then  $d>b$, so $H(M,N\epsilon)=\{0\} \text{ since } c-\epsilon>a $.  Also, 
$H(N,M\epsilon)=\{0\} \text{ since } b-\epsilon<c.$  Similarly, if $d<b$, then both homomorphsims are necessarily zero.

Thus, we have shown that at $\epsilon$ the variety $V^1$ is the origin.  We now continue within this case.  By {\bf{Proposition} \ref{prop hom}}, $m_1=max\{|a-c|,|b-d|\}$ is the value when the last homomorphism is born.  By {\bf{Lemma} \ref{lemma dead hom}}, the variety $V^2$ which must be an axis, appears at $m_1$.  But at $max\{|a-c|,|b-d|\}$, exactly one of $Hom(M,N\epsilon), Hom(M,N\epsilon)$ is nonzero, thus at $max\{|a-c|,|b-d|\}$ the variety is an axis.  By {\bf{Proposition} \ref{prop hom}}, the last variety $V^3$ is zero, which occurs at $max\{|a-d|,|b-c|\}=max\{\sigma',\tau'\}$.

Now, suppose that $m_1 = m_2 $, so $max\{|a-c|,|b-d|\} = max\left\{ \frac{b-a}{2},\frac{d-c}{2} \right\}$.  By our comments in {\bf{Lemma} \ref{lemma dead hom}} necessarily, the supports of $M, N$ overlap, and neither contains the other.  Without loss of generality, say $a < c<b<d$.  Then,
$$S_{M,N}=[m_1,max\{d-a,b-c\})=[m_1,d-a)\textrm{, and }S_{N,M}=[0,b-c). $$
First, say $c-a = m_1 = max\{c-a,d-b\}$, so
$$ c-a = \tfrac{b-a}{2} \textrm{, or } c-a = \tfrac{d-c}{2}.$$
If $c=a+\frac{b-a}{2} \geq a + \frac{d-c}{2}$, then
$$a \leq a < d-\tfrac{b-a}{2} \leq b\textrm{, so } H(M,N\epsilon) \neq \{0\}.$$
However, $H(N,M\epsilon)=\{0\}$, since
$$a-\tfrac{b-a}{2} \leq a+\tfrac{b-a}{2} =b-\tfrac{b-a}{2} \leq d. $$
If instead $c=a+\frac{d--c}{2}\geq a+\frac{b-a}{2}$, a similar analysis shows that 
$$H(M,N\epsilon)\neq \{0\}\textrm{, and }H(N,M\epsilon)=\{0\}. $$
Thus, at $\epsilon$ equal to the interleaving distance, one homomorphism is born, and clearly both 
$$H(M,M2\epsilon) =\{0\}\textrm{, and } H(N,N2\epsilon)=\{0\}.$$
Thus, we have that $V^1$ is given by an axis (in this case the $x$-axis).  By {\bf{Lemma} \ref{lemma dead hom}}, the last variety $V^2$ is the origin, which occurs at $max\{|a-d|,|b-c|\}=max\{\sigma',\tau'\}$, which is $d-b$ in our case.

Lastly, suppose $m_1 < m_2$.  Then, $max\{|a-c|,|b-d|\} < max\left\{ \frac{b-a}{2},\frac{d-c}{2} \right\}$, and the Interleaving distance is $\epsilon = max\{|a-c|,|b-d|\} $.  Clearly, at least one of 
$$H(M,M2\epsilon), H(N,N2\epsilon) \textrm{ will survive}. $$
First, suppose the supports of $M, N$ intersect, but neither contains the other.  Without loss of generality, say $a<c<b<d$.  Then,
$$S_{M,N}=[m_1,d-a), S_{N,M}=[0,b-c). $$
We will show that both homomorphisms are alive at the interleaving distance $\epsilon=m_1$.    But this is clear, since 
\begin{eqnarray*}
d-b&\leq& c-a<\tfrac{b-a}{2} \implies b-c > \tfrac{b-a}{2} >c-a \textrm{, and}\\
d-b&\leq& c-a<\tfrac{d-c}{2} \implies b-c > \tfrac{d-c}{2} >c-a.
\end{eqnarray*}
Similarly, 
\begin{eqnarray*}
c-a&\leq& d-b<\tfrac{d-c}{2} \implies b-c > \tfrac{d-c}{2} >d-b \textrm{, and}\\
c-a&\leq& d-b<\tfrac{b-a}{2} \implies b-c > \tfrac{b-a}{2} >d-b.\\
\end{eqnarray*}
Therefore, we have established that in this case, the first variety $V^1$ is a hyperbola.  Moreover, the above calculatiion shows that $m_2 < b-c$, thus $V^2$ is a plane begining at $\epsilon=m_2$.  Continuing, since $b-a <d-a$, $V^3$ is an axis ocurring at $\epsilon=b-a$ and $V^4$ is the origin, which starts at $\epsilon = d-a$.

Since disjoint supports are not possible when $m_1 < m_2$, it remains only to consider when, say $M \supseteq N$, so suppose $a \leq c < d \leq b$.  Then,
$$S_{M,N}=[c-a,d-a)\textrm{, and }S_{N,M}=[b-d,b-c). $$
Thus, by inspection, at the interleaving distance $m_1=max\{c-a.b-d\}$, both homomorphims are alive.  Thus, since $H(M,M2m_1)\neq \{0\}$, the first variety $V_1$ is a hyperbola.  Continuing, it follows from properties of interleavings that $m_2 =\frac{b-a}{2} <max\{d-a,b-c\}$, but we one easily checks that
$$\tfrac{b-a}{2}\geq d-a \implies b-d\geq \tfrac{b-a}{2}\textrm{, and }\tfrac{b-a}{2}\geq b-c \implies c-a \geq \tfrac{b-a}{2}. $$
Thus, $V^2$ is the plane, which occurs at $m_2=\tfrac{b-a}{2}$.  Now let
$$r=min\{d-a,b-c\} \textrm{, and }s=max\{d-a,b-c\}. $$
If $r=s$, then both homomorhisms die at the same time, and the progression ends with $V^3$ equal to the origin.  Alternatively, if $r<s$, then $V^3$ is an axis ocurring at $r$, and $V^4$ is the origin beginning at $s$.

Since the cases, 
$$m_1 >m_2, m_1=m_2, m_1<m_2$$
constitute a partition, the result holds.
\end{proof}

\bibliography{master_bib}
\bibliographystyle{abbrv}

\end{document}